\numberwithin{equation}{section}
\newtheorem{thm}{Theorem}[section]
\newtheorem{lem}[thm]{Lemma}
\newtheorem{defin}[thm]{Definition}
\begin{document}
\title{Inverse problem of determining an order of the
Riemann-Liouville time-fractional derivative}

%    Information for first author
\author{Shavkat Alimov}
\author{Ravshan Ashurov}
%    Address of record for the research reported here
\address{National University of Uzbekistan named after Mirzo Ulugbek and Institute of Mathematics, Uzbekistan Academy of Science}
%    Current address
\curraddr{Institute of Mathematics, Uzbekistan Academy of Science,
Tashkent, 81 Mirzo Ulugbek str. 100170} \email{ashurovr@gmail.com}
%    \thanks will become a 1st page footnote.

\small

\title[Inverse problem of determining an order of time-fractional
derivative] {Inverse problem of determining an order of the
Riemann-Liouville time-fractional derivative}

\begin{abstract}

The inverse problem of determining the order of the fractional   Riemann-Liouville derivative with respect to  time in the subdiffusion
equation with an arbitrary positive self-adjoint operator having a discrete spectrum is considered. Using the classical Fourier
method it is proved, that the value of the norm $||u(t)||$ of the
solution at a fixed time instance recovers uniquely the order of
derivative. A list of examples is discussed, including a linear
system of fractional differential equations, differential models
with involution, fractional Sturm-Liouville operators, and many
others.

\vskip 0.3cm \noindent {\it AMS 2000 Mathematics Subject
Classifications} :
Primary 35R11; Secondary 74S25.\\
{\it Key words}: Determination of order of derivative, Fourier method, inverse and initial-boundary value problem, Riemann-Liouville
derivatives, subdiffusion equation. 
\end{abstract}

\maketitle

\section{Main result}

It has long been known that to model subdiffusion (anomalous or
slow diffusion) processes, it is necessary to use differential
equations of fractional order $\rho\in (0,1)$. But in this case,
unlike differential equations of integer order, an order of the
fractional derivative $\rho$ is often unknown and difficult to be
directly measured. The determination of this
parameter is called the inverse problem of determining the order of the fractional derivative. These inverse problems are
not only theoretically interesting, but also necessary for finding solutions to initial-boundary value problems and studying
properties of solutions. The paper \cite{ash1}  by Li, Liu, Yamamoto surveys works on such inverse problems.

In the present paper, we are concerned with inversion for order in the subdiffusion equation with the Riemann-Liouville
time-fractional derivative. 

Let $H$ be a separable Hilbert space with the scalar product
$(\cdot, \cdot)$ and the norm $||\cdot||$ and  $A: H\rightarrow H$
be an arbitrary positive selfadjoint operator in $H$. Suppose that
$A$ has a complete in $H$ system of orthonormal eigenfunctions
$\{\upsilon_k\}$ and a countable set of nonnegative eigenvalues
$\lambda_k$. It is convenient to assume that the eigenvalues do
not decrease as their number increases, i.e.
$0<\lambda_1\leq\lambda_2 \cdot\cdot\cdot$.

Using the definitions of a strong integral and a strong
derivative, fractional analogues of integrals and derivatives can
be determined for vector-valued functions (or simply functions)
$h: \mathbb{R}_+\rightarrow H$, while the well-known formulae and
properties are preserved (see, for example, \cite{ash2}).Recall that the fractional integration of order $\rho<0$ of the function
$h(t)$ defined on $[0, \infty)$ has the form
$$
\partial_t^\rho h(t)=\frac{1}{\Gamma
(-\rho)}\int\limits_0^t\frac{h(\xi)}{(t-\xi)^{\rho+1}} d\xi, \quad
t>0,
$$
provided the right-hand side exists. Here $\Gamma(\rho)$ is
Euler's gamma function. Using this definition one can define the
Riemann - Liouville fractional derivative of order $\rho$,
$0<\rho< 1$, as
$$
\partial_t^\rho h(t)= \frac{d}{dt}\partial_t^{\rho-1} h(t).
$$
If in this definition we interchange the differentiation and
fractional integration, then we get the definition of a
regularized derivative, that is, the definition of a fractional
derivative in the sense of Caputo:
$$
D_t^\rho h(t)= \partial_t^{\rho-1}\frac{d}{dt} h(t).
$$
Note that if  $\rho=1$  , then fractional derivative coincides with the ordinary classical
derivative of the first order: 
$$
\partial_t h(t)= \frac{d}{dt} h(t).
$$

Let $\rho\in(0,1) $ be a fixed number and let $C((a,b); H)$ stand
for a set of continuous functions $u(t)$  of $t\in (a,b)$ with
values in $H$. Consider the Cauchy type problem:
\begin{equation}\label{prob1}
\left\{
\begin{aligned}
&\partial_t^\rho u(t) + Au(t) = 0,\quad 0<t\leq T;\\
&\lim\limits_{t\rightarrow 0}\partial_t^{\rho-1} u(t) = \varphi,
\end{aligned}
\right.
\end{equation}
where $\varphi$ is a given vector in $H$. If  $\rho=1$, then the initial condition has the form $u(0) = \varphi$.  This problem is called a
\emph{forward problem}.

\begin{defin}\label{def} A function $u(t)$ with the properties
$\partial_t^\rho u(t), Au(t)\in C((0,T]; H)$ and satisfying
conditions (\ref{prob1})  is called \textbf{the
 solution} of the forward problem (\ref{prob1}).
\end{defin}

Let us denote by $E_{\rho, \mu}(t)$ the Mittag-Leffler function of
the form
$$
E_{\rho, \mu}(t)= \sum\limits_{k=0}^\infty \frac{t^k}{\Gamma(\rho
k+\mu)}.
$$

We first prove the existence and uniqueness of a solution of
problem (\ref{prob1}).

\begin{thm}\label{tfp} For any $\varphi\in H$ problem (\ref{prob1}) has a unique solution
and this solution has the form
\begin{equation}\label{fp}
u(t)=\sum\limits_{k=1}^\infty t^{\rho -1}E_{\rho, \rho}
(-\lambda_k t^\rho)(\varphi, v_k)  v_k.
\end{equation}
\end{thm}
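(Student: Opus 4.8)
The plan is to use the classical Fourier method: expand the solution in the eigenbasis $\{v_k\}$, solve the resulting scalar fractional ODEs, and verify convergence.

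First I would seek a formal solution in the form $u(t) = \sum_{k=1}^\infty T_k(t)(\varphi, v_k) v_k$, where the scalar coefficients $T_k(t)$ must satisfy the Cauchy-type problem $\partial_t^\rho T_k(t) + \lambda_k T_k(t) = 0$ on $(0,T]$ together with the initial condition $\lim_{t\to 0}\partial_t^{\rho-1} T_k(t) = 1$. This is the standard scalar Riemann-Liouville fractional relaxation equation, whose unique solution is $T_k(t) = t^{\rho-1} E_{\rho,\rho}(-\lambda_k t^\rho)$; this can be checked either by termwise application of $\partial_t^\rho$ to the Mittag-Leffler series using the identity $\partial_t^\rho (t^{\mu-1}) = \frac{\Gamma(\mu)}{\Gamma(\mu-\rho)} t^{\mu-\rho-1}$, or by applying the Laplace transform. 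Plugging these coefficients back in yields formula (\ref{fp}).

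Next I would establish that this series genuinely defines a solution in the sense of Definition \ref{def}, i.e. that $\partial_t^\rho u(t)$ and $Au(t)$ both belong to $C((0,T];H)$. The key analytic input is the decay estimate $|E_{\rho,\rho}(-\lambda t^\rho)| \le \frac{C}{1+\lambda t^\rho}$ valid for $\lambda, t > 0$, which gives $\lambda_k |t^{\rho-1} E_{\rho,\rho}(-\lambda_k t^\rho)| \le C t^{-1}$, uniformly in $k$ on any interval $[t_0, T]$. Using Parseval's identity and the fact that $\lambda_k t^{\rho-1} E_{\rho,\rho}(-\lambda_k t^\rho)(\varphi, v_k)$ is bounded in $\ell^2$ by $C t_0^{-1}\|\varphi\|$, I would conclude that the series $\sum_k \lambda_k T_k(t)(\varphi,v_k) v_k = -\partial_t^\rho u(t)$ converges in $H$ uniformly on $[t_0,T]$, hence $Au(t), \partial_t^\rho u(t) \in C((0,T];H)$. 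The initial condition is verified by noting $\partial_t^{\rho-1}u(t) = \sum_k E_{\rho,1}(-\lambda_k t^\rho)(\varphi,v_k)v_k$ and that $E_{\rho,1}(0) = 1$ with the dominated-convergence argument giving the limit $\varphi$ in $H$ as $t\to 0$.

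Finally, for uniqueness I would take an arbitrary solution $u(t)$, set $T_k(t) = (u(t), v_k)$, and show using the self-adjointness of $A$ and the fact that $A v_k = \lambda_k v_k$ that each $T_k$ solves the scalar problem above (here one uses that $\partial_t^\rho$ commutes with taking the inner product against the fixed vector $v_k$, which follows from the definition of the strong fractional integral/derivative); by scalar uniqueness $T_k(t) = t^{\rho-1}E_{\rho,\rho}(-\lambda_k t^\rho)(\varphi,v_k)$, so $u$ coincides with (\ref{fp}) by completeness of $\{v_k\}$. I expect the main obstacle to be the convergence/regularity bookkeeping near $t=0$: the factor $t^{\rho-1}$ is singular there, so one must be careful that all claimed continuity is only on $(0,T]$ and that the limit defining the initial condition is handled in the $H$-norm rather than pointwise, invoking the Mittag-Leffler asymptotics at both small and large argument.
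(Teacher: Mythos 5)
Your proposal is correct and follows essentially the same route as the paper: the classical Fourier method with the Mittag-Leffler decay bound $|E_{\rho,\mu}(-t)|\le C/(1+t)$ combined with Parseval's identity to get $\|Au(t)\|\le Ct^{-1}\|\varphi\|$ (hence $Au,\partial_t^\rho u\in C((0,T];H)$), and uniqueness via projecting an arbitrary solution onto the eigenbasis and invoking uniqueness for the scalar Riemann--Liouville relaxation equation. Your explicit verification of the initial condition through $\partial_t^{\rho-1}u(t)=\sum_k E_{\rho,1}(-\lambda_k t^\rho)(\varphi,v_k)v_k$ is a detail the paper delegates to references, but it does not change the method.
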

The problem  (\ref{prob1}) for various operators A has been considered by a number of authors. Let us mention only some of
these works. The case of one spatial variable $x\in \mathbb{R}$ and subdiffusion equation with $Au=u_{xx}$ considered, for example, in
the book of A.A. Kilbas et al. \cite{ash3} and monograph of A. V. Pskhu \cite{ash4}, and references in these works. The paper Gorenflo,
Luchko and Yamamoto \cite{ash5} is devoted to the study of subdiffusion equations in Sobelev spaces. In the paper by Kubica
and Yamamoto \cite{ash6}, initial-boundary value problems for equations with time-dependent coefficients are considered. In the
multidimensional case $(x\in R^N)$, instead of the differential expression $u_{xx}$, authors of the papers \cite{ash3}, \cite{ash7}-\cite{ash9} considered the Laplace operator and Umarov \cite{ash10} consedered pseudodifferential operators with constant coefficients in the whole
space  $\mathbb{R}^N$.

A result similar to the above, in the case when the fractional part of the equation (1) is the Caputo derivative, was obtained by M. Ruzhansky et al. \cite{ash11}.
In the case when $A$ is an arbitrary elliptic differential operator,
this theorem was proved in \cite{ash12}.

Obviously solution (\ref{fp}) depends on $\rho\in (0,1)$. Now let us consider the order of fractional derivative $\rho$ as a unknown
parameter and consider an inverse problem: can we identify
uniquely this parameter $\rho$, if we have as a additional
information the norm
\begin{equation}\label{exN}
W(t_0, \rho)= ||u(t_0)||^2=d_0
\end{equation}
at a fixed time instant $t_0>0$?

Problem (\ref{prob1}) together with extra condition (\ref{exN}) is
called \emph{the inverse problem}.

To solve this inverse problem we fix a number $\rho_0\in (0,1)$
and consider the problem for $\rho\in [\rho_0, 1)$.

\begin{defin}\label{Idef} A pair $\{u(t), \rho\}$ of the solution $u(t)$ to the forward problem
and the parameter  $\rho\in [\rho_0, 1]$ satisfying the
additional condition (\ref{exN}) is called the solution of the inverse problem.

\end{defin}

\begin{lem}\label{W} Given $\rho_0$ from interval $0<\rho_0< 1$, there exists
a number $T_0=T_0(\rho_0,\lambda_1)$, such that for all $t_0\geq
T_0$ and for arbitrary $\varphi\in H$ function $W(t_0, \rho)$ decreases monotonically with respect to $\rho\in [\rho_0,1]$.

\end{lem}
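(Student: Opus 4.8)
The plan is to diagonalize $A$, reduce the monotonicity of $W(t_0,\cdot)$ to that of a single spectral mode, and control that mode by Mittag--Leffler asymptotics for large time.

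\textbf{Step 1 (reduction to one mode).} By Theorem~\ref{tfp} and orthonormality of $\{v_k\}$,
\[
W(t_0,\rho)=\sum_{k=1}^\infty g_{\lambda_k}(\rho)^2\,|(\varphi,v_k)|^2,\qquad g_\lambda(\rho):=t_0^{\rho-1}E_{\rho,\rho}(-\lambda t_0^\rho).
\]
Since the weights $|(\varphi,v_k)|^2$ are nonnegative and $\varphi$ is arbitrary, it suffices to show that for each fixed $\lambda\ge\lambda_1$ the function $\rho\mapsto g_\lambda(\rho)^2$ is strictly decreasing on $[\rho_0,1]$ once $t_0\ge T_0(\rho_0,\lambda_1)$. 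Here $g_\lambda(\rho)>0$: indeed $t^{\rho-1}E_{\rho,\rho}(-\lambda t^\rho)=-\lambda^{-1}\frac{d}{dt}E_{\rho,1}(-\lambda t^\rho)$ and $t\mapsto E_{\rho,1}(-\lambda t^\rho)$ is completely monotone, so its $t$-derivative is $\le0$. Hence $g_\lambda^2$ decreases iff $\partial_\rho g_\lambda<0$.

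\textbf{Step 2 (the mechanism, for $\rho$ away from $1$).} Put $x=\lambda t_0^\rho$ and $F(\rho,x)=E_{\rho,\rho}(-x)$, with $F,F_x,F_\rho$ below understood at $x=\lambda t_0^\rho$. A direct computation gives the exact identity
\[
\partial_\rho g_\lambda(\rho)=t_0^{\rho-1}\bigl[(\ln t_0)(F+xF_x)+F_\rho\bigr].
\]
For large $x$ and $0<\rho<1$, Mittag--Leffler asymptotics give $F=\frac{\rho}{\Gamma(1-\rho)}x^{-2}+O(x^{-3})$ (the $x^{-1}$-coefficient $1/\Gamma(0)$ vanishes), hence $F+xF_x=\partial_x(xF)=-\frac{\rho}{\Gamma(1-\rho)}x^{-2}+O(x^{-3})<0$ and $F_\rho=\frac{1+\rho\,\psi(1-\rho)}{\Gamma(1-\rho)}x^{-2}+O(x^{-3})$, where $\psi=\Gamma'/\Gamma$. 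Thus, up to the positive factor $x^{-2}/\Gamma(1-\rho)$, the bracket equals $-\rho\ln t_0+1+\rho\,\psi(1-\rho)+o(1)$, which is $<0$ once $\ln t_0>1/\rho_0$ because $1+\rho\,\psi(1-\rho)<1$ on $(0,1)$; and $x=\lambda t_0^\rho\ge\lambda_1 t_0^{\rho_0}\to\infty$ uniformly in $\lambda\ge\lambda_1$ and $\rho\in[\rho_0,1]$. To make the $O$-terms uniform in $\lambda$ I would argue from the Laplace-inversion representation
\[
g_\lambda(\rho)=\frac{\sin\pi\rho}{\pi}\int_0^\infty\frac{r^\rho e^{-rt_0}}{r^{2\rho}+2\lambda r^\rho\cos\pi\rho+\lambda^2}\,dr\qquad(0<\rho<1),
\]
where $e^{-rt_0}$ and $\lambda\ge\lambda_1>0$ confine the integral to small $r$, on which the denominator is $\lambda^2\bigl(1+O(\lambda^{-1}r^\rho)\bigr)$, and differentiate in $\rho$ under the integral sign.

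\textbf{Step 3 (the region $\rho\to1$: the main obstacle).} Near $\rho=1$ the coefficient $\rho/\Gamma(1-\rho)\to0$, the modal profile changes over from algebraic to exponential decay, and the representation above degenerates ($p^\rho+\lambda$ loses its branch cut), so Step~2 is not uniform up to $\rho=1$. I would fix a small $\delta=\delta(\rho_0)>0$, dispose of $[\rho_0,1-\delta]$ by Step~2 (all $O$-constants uniform there), and on $[1-\delta,1]$ return to the exact identity for $\partial_\rho g_\lambda$ and use (i) $F+xF_x=\frac{d}{dx}\bigl(xE_{\rho,\rho}(-x)\bigr)<0$ for $x$ beyond a threshold uniform in $\rho\in[1-\delta,1]$ --- at $\rho=1$ this reads $(1-x)e^{-x}<0$, and in general it follows from complete monotonicity of $x\mapsto E_{\rho,\rho}(-x)$, which forces the elasticity $xF_x/F$ below $-1$ for large $x$; and (ii) $F_\rho/F$ bounded above, uniformly in such $\rho$ and in large $x$ --- it is $\approx\psi(-\rho)\le\psi(-\rho_0)$ in the algebraic regime and negative (since $F>0$ while $\partial_\rho E_{\rho,\rho}(-x)|_{\rho=1}\approx -x^{-2}$) in the exponential regime. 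Then the bracket is $<0$ as soon as $\ln t_0$ exceeds a constant depending only on $\rho_0$ and $\lambda_1$; summing the modal inequalities over $k$ yields $\partial_\rho W(t_0,\rho)<0$ on $[\rho_0,1]$, and $T_0$ is the largest threshold produced. I expect establishing (i)--(ii) uniformly up to $\rho=1$ and in $\lambda\ge\lambda_1$ to be the hardest step.
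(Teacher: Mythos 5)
Your leading-order mechanism is exactly the one the paper uses: reduce $W(t_0,\rho)=\sum_k g_{\lambda_k}(\rho)^2|(\varphi,v_k)|^2$ to the negativity of $\partial_\rho g_\lambda$ for each $\lambda\ge\lambda_1$, and identify the dominant contribution as coming from the term $-1/(\Gamma(-\rho)\lambda^2t_0^{\rho+1})=\rho/(\Gamma(1-\rho)\lambda^2t_0^{\rho+1})$ of the large-argument expansion of $t_0^{\rho-1}E_{\rho,\rho}(-\lambda t_0^\rho)$ (the paper's $f_1$), whose $\rho$-derivative is $\le -1/(\lambda^2t_0^{\rho+1})$ once $\ln t_0$ exceeds a constant depending on $\rho_0$. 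Your Steps 1 and 2 are correct in substance, and your threshold $\ln t_0>1/\rho_0$ is of the same nature as the paper's $T_0=e^{1-\gamma}e^{2/\rho_0}$.

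The genuine gap is the control of the remainder uniformly up to $\rho=1$, and you have located it yourself. The branch-cut representation $g_\lambda(\rho)=\frac{\sin\pi\rho}{\pi}\int_0^\infty\cdots\,dr$ degenerates as $\rho\to1$, and the substitute claims (i)--(ii) of Step 3 are asserted, not proved; (ii) in particular is delicate in the crossover region $1-\rho\sim x^2e^{-x}$, where $E_{\rho,\rho}(-x)$ passes from algebraic to exponential decay and the coefficient $\rho/\Gamma(1-\rho)$ of the two-term asymptotics vanishes, so the bound on $F_\rho/F$ cannot be read off from that expansion. Even in Step 2 you differentiate an asymptotic expansion in both $x$ and the parameter $\rho$, which requires a uniform remainder estimate you do not supply. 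The paper sidesteps both problems with a single representation valid on all of $[\rho_0,1]$: the Hankel-contour formula $t_0^{\rho-1}E_{\rho,\rho}(-\lambda t_0^\rho)=f_1(\rho)+f_2(\rho)$, with $f_2$ an integral over the contour $\delta(1;\tfrac{3\pi\rho}{4})$; differentiating under the integral (after splitting off the two rays and the arc, whose endpoints depend on $\rho$) yields $|f_2'(\rho)|\le C\lambda^{-3}t_0^{-2\rho-1}(1/\rho+\ln t_0)$, smaller than $|f_1'(\rho)|$ by a factor of order $\lambda t_0^{\rho}$, and this one estimate covers the whole interval including a neighborhood of $\rho=1$. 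To complete your route you would either have to prove (i)--(ii) with constants uniform in $\rho\in[1-\delta,1]$ and $\lambda\ge\lambda_1$, or replace the branch-cut formula by such a contour representation --- which is precisely the paper's proof.
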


The main result of the paper is the following:

\begin{thm}\label{tin} Let $ \varphi \in H$ and $t_0\geq T_0$.
Then the inverse problem  has a unique solution $\{u(t), \rho \}$
if and only if
\[
W(t_0, 1)\leq d_0\leq W(t_0, \rho_0).
\]
\end{thm}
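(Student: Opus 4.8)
The plan is to combine the monotonicity of $\rho\mapsto W(t_0,\rho)$ supplied by Lemma~\ref{W} with the continuity of this map on the closed interval $[\rho_0,1]$, and then invoke the intermediate value theorem. The ``only if'' direction is immediate: if $\{u(t),\rho\}$ solves the inverse problem, then by Definition~\ref{Idef} we have $\rho\in[\rho_0,1]$ and $W(t_0,\rho)=d_0$, and since $t_0\ge T_0$, Lemma~\ref{W} yields $W(t_0,1)\le W(t_0,\rho)\le W(t_0,\rho_0)$, that is, $W(t_0,1)\le d_0\le W(t_0,\rho_0)$.

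For the converse I would first establish that $g(\rho):=W(t_0,\rho)$ is continuous on $[\rho_0,1]$. By Theorem~\ref{tfp} and Parseval's identity,
$$
g(\rho)=t_0^{2\rho-2}\sum_{k=1}^\infty\big[E_{\rho,\rho}(-\lambda_k t_0^\rho)\big]^2\,|(\varphi,v_k)|^2 .
$$
Each summand is continuous in $\rho\in[\rho_0,1]$, the Mittag-Leffler function being continuous in its parameters, with the limiting case $E_{1,1}(-x)=e^{-x}$, so that the formula extends continuously up to $\rho=1$. To interchange the limit and the sum I would apply the Weierstrass $M$-test: using the standard estimate $0\le E_{\rho,\rho}(-x)\le C/(1+x)$ for $x\ge0$ with $C$ bounded for $\rho\in[\rho_0,1]$, together with $t_0^{2\rho-2}\le\max\{1,t_0^{2\rho_0-2}\}$ and $\lambda_k t_0^\rho\ge\lambda_1\min\{t_0,t_0^{\rho_0}\}$, every summand is dominated by $M\,|(\varphi,v_k)|^2$ with $M$ independent of $\rho$ and $k$; since $\sum_k|(\varphi,v_k)|^2=\|\varphi\|^2<\infty$, the series converges uniformly on $[\rho_0,1]$ and hence $g$ is continuous there.

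Now suppose $W(t_0,1)\le d_0\le W(t_0,\rho_0)$. By Lemma~\ref{W} the continuous function $g$ is monotone decreasing on $[\rho_0,1]$, hence its range is exactly $[g(1),g(\rho_0)]=[W(t_0,1),W(t_0,\rho_0)]$, which contains $d_0$; so there exists $\rho\in[\rho_0,1]$ with $g(\rho)=d_0$, and taking $u(t)$ as in~(\ref{fp}) with this $\rho$ gives a solution $\{u(t),\rho\}$ of the inverse problem. For uniqueness: if $g(\rho_1)=g(\rho_2)=d_0$ with $\rho_1<\rho_2$, the strict monotonicity in Lemma~\ref{W} forces $g(\rho_1)>g(\rho_2)$, a contradiction, so $\rho$ is unique, and then $u(t)$ is uniquely determined by~(\ref{fp}) in view of Theorem~\ref{tfp}. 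The main obstacle is precisely the continuity of $g$: one must dominate $E_{\rho,\rho}(-\lambda_k t_0^\rho)$ by a factor summable against $|(\varphi,v_k)|^2$ with no assumption on $\varphi$ beyond $\varphi\in H$, and verify that the $M$-test bound is uniform in $\rho$ all the way up to $\rho=1$, where $t^{\rho-1}E_{\rho,\rho}(-\lambda t^\rho)$ degenerates to $e^{-\lambda t}$.
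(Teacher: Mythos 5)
Your proposal is correct and follows essentially the same route as the paper, which simply declares Theorem~\ref{tin} to be ``an easy consequence'' of Lemmas~\ref{W} and~\ref{ek}: monotonicity of $\rho\mapsto W(t_0,\rho)$ for existence via the intermediate value theorem and strict monotonicity for uniqueness. Your explicit verification of the continuity of $W(t_0,\cdot)$ on $[\rho_0,1]$ by dominated convergence is a detail the paper leaves entirely implicit, and it is carried out correctly.
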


Theorem \ref{tin} gives a positive answer to the problem posed in
review article by Z. Li et al.  \cite{ash1} (p. 440) in the
Conclusions and Open Problems section: Is it possible to identify
uniquely the order of fractional derivatives if an additional
information about the solution is specified at a fixed time
instant as "the observation data"?.

Note that our result shows, that it is possible to restore the order of the fractional derivative by using the value of  $W(t,\rho)$ at a fixed time
instant $t_0$ as "the observation data".

The inverse problem of determining the order of time fractional derivative in subdiffusion equations has been studied
by a number of authors (see a survey paper \cite{ash1} and references therein, \cite{ash13}-\cite{ash23}). It is necessary to note that in all these
publications the following relation was taken as an additional condition
 
 \begin{equation}\label{ex1}
u(x_0,t)= h(t), \,\, 0<t<T,
\end{equation}
at a monitoring point $x_0\in \overline{\Omega}$.
But this condition, as a rule, can guarantee only the uniqueness of the solution of the inverse
problem (see \cite{ash13}-\cite{ash16}). However, as Theorem 2 states, unlike (\ref{ex1}), condition (\ref{exN}) guarantees both uniqueness
and the existence of a solution.

Hatano et al. \cite{ash17} considered  the equation $\partial_t^\rho
u =\triangle u$ with the Dirichlet boundary condition and the
initial function $\varphi(x)$ (see also \cite{ash18}). They proved
the following property of the parameter $\rho$: if $\varphi\in
C_0^\infty(\Omega)$ and $\triangle \varphi(x_0)\neq 0$, then
$$\rho =\lim\limits_{t\rightarrow
0}\big[t\partial_t u(x_0,t)[u(x_0,t)-\varphi(x_0)]^{-1}\big].$$

For the best of our knowledge, only in the paper \cite{ash19} by J.
Janno  the existence problem is considered. Giving an extra
boundary condition $B u(\cdot, t)= h(t), 0<t<T$ the author
succeeded to prove the existence theorem for determining the order
$\rho$, $0<\rho< 1$, of the Caputo derivative and the kernel of
the integral operator in the equation.

We also note the following recent papers. In the paper  by Z. Li
and Z. Zhang \cite{ash20} the authors studied the uniqueness
in an inverse problem for simultaneously determining the order of time fractional derivative and a source function in a
subdiffusion equation.
 In \cite{ash21}, 
M. Yamamoto proved the uniqueness in determining both orders of fractional time
derivatives and spatial derivatives in diffusion equations. The proof relies on the eigenfunction expansion and the
asymptotics of the Mittag-Leffler function. The authors of \cite{ash22} discuss similar issues discussed in the present paper. As
an additional information for inverse problem they have considered the value of projection of the solution onto the first
eigenfunction at a fixed time instance. Note, that results of paper \cite{ash22} are applicable only in case, when the first
eigenvalue of the corresponding elliptic operator is equal to zero. We also mention the paper \cite{ash23}, in which a result
similar to Theorem 2 was proved for the subdiffusion equation with the Caputo derivative. 
 
In conclusion, we give the following remarks:

1) As the operator $A$, one can take any equations of mathematical physics considered in Section 6 of the article by
M. Ruzhansky et al. \cite{ash11}, including the classical Sturm-Liouville problem, differential models with involution, fractional
Sturm-Liouville operators, harmonic and anharmonic oscillators, Landau Hamiltonians, fractional Laplacians, harmonic
and anharmonic operators on the Heisenberg group.

It should be noted, that the authors of \cite{ash11} considered inverse problems for restoring the right-hand side of a
subdiffusion equation for a large class of positive operators $A$.

2) Further, let us take $\mathbb{R}^N$ as a Hilbert space $H$ and
$N$-dimensional symmetric quadratic matrix $A=\{a_{i,j}\}$ with
constant elements $a_{i,j}$ as operator $A$. In this case, the
problem (\ref{prob1}) coincides with the Cauchy problem for a
linear system of fractional differential equations.

3) You can also consider various options for the function $W(t,
\rho)$. Examples $W(t, \rho)=||Au(t)||^2$, $W(t,
\rho)=(u,\varphi)$.

\section{Forward problem}
In the present section we prove Theorem \ref{tfp}.

To prove the existence of the forward problem's solution we remind
the following estimate of the Mittag-Leffler function with a
negative argument  (see, for example, \cite{ash24},  p. 136)
\begin{equation}\label{m1}
|E_{\rho, \mu}(-t)|\leq \frac{C}{1+ t}, \quad t>0.
\end{equation}

In accordance with Definition \ref{def}, we will first show that for function (\ref{fp}) one has $Au(t)\in C((0,T]; H)$. To do this,
consider the sum

$$
S_j(t)=\sum\limits_{k=1}^j t^{\rho-1}E_{\rho, \rho}(-\lambda_k
t^\rho)\, (\varphi,v_k)v_k.
$$
Then
\[
AS_j(t)=\sum\limits_{k=1}^j \lambda_k t^{\rho-1}
E_{\rho}(-\lambda_k t^\rho)\, (\varphi,v_k)v_k.
\]
Due to the Parseval equality we may write
$$
||AS_j(t)||^2=\sum\limits_{k=1}^j |\lambda_k t^{\rho-1}
E_{\rho}(-\lambda_k t^\rho)\, (\varphi,v_k)|^2 \leq C
t^{-2}||\varphi||^2.
$$
Here we used estimate (\ref{m1}) and the inequality $\lambda
t^{\rho}(1+\lambda t^{\rho})^{-1} < 1$.

Hence, we obtain  $Au(t)\in C((0,T]; H)$.

Further, from equation (\ref{prob1}) one has $\partial_t^\rho
S_j(t)= - AS_j(t)$. Therefore, from above reasoning, we finally
have $\partial_t^\rho u(t)\in C((0,T]; H)$.

It is not hard to verify the fulfillment of equation (\ref{prob1})
(see, for example, \cite{ash25}, p. 173 and \cite{ash26}) and the
initial condition therein.

Now we prove the uniqueness of the forward problem's solution.

Suppose that problem (\ref{prob1}) has two solutions $u_1(t)$ and
$u_2(t)$. Our aim is to prove that $u(t)=u_1(t)-u_2(t)\equiv 0$.
Since the problem is linear, then we have the following homogenous
problem for $u(t)$:
\begin{equation}\label{ur1}
\partial_t^\rho u(t) + Au(t) = 0, \quad t>0;
\end{equation}
\begin{equation}\label{nu1}
\lim\limits_{t\rightarrow 0}\partial_t^{\rho-1} u(t) = 0.
\end{equation}

Set
\[
w_k(t)\ =\ (u(t), v_k).
\]

It follows from (\ref{ur1}) that for any $k\in \mathbb{N}$
\[
\partial_t^\rho w_k(t) = (\partial_t^\rho u(t), v_k) = -\, (Au(t), v_k)= -\,
(u(t), Av_k) = -\, \lambda_k w_k(t).
\]

Therefore, we have the following Cauchy problem for $w_k(t)$ (see
(\ref{nu1})):
$$
\partial_t^\rho w_k(t) +\lambda_k w_k(t)=0,\quad t>0; \quad \lim\limits_{t\rightarrow 0}\partial_t^{\rho-1} w_k(t)=0.
$$
This problem has the unique solution (see, for example,
\cite{ash25}, p. 173 and \cite{ash26}). Therefore, $w_k(t) = 0$ for
$t>0$ and for all $k\geq 1$. Then by the Parseval equation we
obtain $u(t) = 0$ for all $t>0$. Hence uniqueness of the solution
is proved.

Thus the proof of Theorem \ref{tfp} is complete.

\section{Inverse problem}

\begin{lem}\label{ek}

Given $\rho_0$ from the interval $0<\rho_0< 1$, there exists a
number $T_0=T_0(\rho_0,\lambda_1)$, such that for all $t_0\geq
T_0$ and  $\lambda \geq \lambda_1$ functions $e_\lambda(\rho) =
t_0^{\rho-1} E_{\rho, \rho}(-\lambda t_0^\rho)$ are positive and they decrease monotonically with respect to $\rho\in [\rho_0, 1]$.

\end{lem}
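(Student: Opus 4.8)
The plan is to reduce the whole statement to the large-argument behaviour of the Mittag--Leffler function. Starting from the fact that the Laplace transform of $t^{\rho-1}E_{\rho,\rho}(-\lambda t^{\rho})$ is $(p^{\rho}+\lambda)^{-1}$ and collapsing the Bromwich contour onto the branch cut along the negative real axis, one obtains, for all $t_0>0$, $\lambda>0$ and $\rho\in(0,1)$, the representation
$$
e_\lambda(\rho)=\frac{\sin(\rho\pi)}{\pi}\int_0^\infty\frac{r^{\rho}\,e^{-rt_0}}{(r^{\rho}+\lambda\cos\rho\pi)^{2}+\lambda^{2}\sin^{2}\rho\pi}\,dr .
$$
The integrand is strictly positive and $\sin(\rho\pi)>0$ on $(0,1)$, so $e_\lambda(\rho)>0$ at once, for every $t_0>0$ and $\lambda>0$; moreover $e_\lambda(1)=e^{-\lambda t_0}>0$. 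Hence positivity is automatic, and the only real content of the lemma is the monotonicity in $\rho$, which is where the largeness of $t_0$ enters.

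For the monotonicity I would first work with $\rho$ bounded away from $1$, using the asymptotic expansion $E_{\rho,\rho}(-x)=-x^{-2}/\Gamma(-\rho)+O(x^{-3})$ as $x\to+\infty$ (the $x^{-1}$--term is absent because $1/\Gamma(0)=0$), which holds uniformly for $\rho\in[\rho_0,1-\delta]$, $\delta>0$ fixed (say $\delta=(1-\rho_0)/2$). Substituting $x=\lambda t_0^{\rho}$ — which $\to+\infty$ uniformly in $\lambda\geq\lambda_1$ and $\rho\geq\rho_0$ as $t_0\to\infty$, since $x\geq\lambda_1 t_0^{\rho_0}$ — gives
$$
e_\lambda(\rho)=-\frac{\lambda^{-2}t_0^{-\rho-1}}{\Gamma(-\rho)}\bigl(1+O(x^{-1})\bigr).
$$
Differentiating this in $\rho$ term by term (legitimate on $[\rho_0,1-\delta]$, where the remainder and its $\rho$--derivative satisfy uniform bounds of the same order) and using $\frac{d}{d\rho}\log t_0^{\rho-1}=\log t_0$ and $\frac{d}{d\rho}\log|\Gamma(-\rho)|=-\psi(-\rho)$, where $\psi$ is the digamma function, one arrives at
$$
\frac{d}{d\rho}e_\lambda(\rho)=-\frac{\lambda^{-2}t_0^{-\rho-1}}{|\Gamma(-\rho)|}\Bigl(\log t_0-\psi(-\rho)+O\bigl(x^{-1}\log t_0\bigr)\Bigr),
$$
with strictly positive prefactor since $\Gamma(-\rho)<0$ on $(0,1)$.

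It remains to fix $T_0$. First take it large enough that $\log T_0>\psi(-\rho_0)$; since $\psi(-\rho)$ is strictly decreasing on $(0,1)$ (its $\rho$--derivative is $-\psi'(-\rho)<0$), this forces $\log t_0-\psi(-\rho)\geq\log t_0-\psi(-\rho_0)>0$ on all of $[\rho_0,1)$. Enlarging $T_0$ further makes the relative error $O(x^{-1}\log t_0)$ less than $\tfrac12$ uniformly in $\lambda\geq\lambda_1$ and $\rho\in[\rho_0,1-\delta]$, using $x^{-1}\leq(\lambda_1 t_0^{\rho_0})^{-1}$ and that $|\Gamma(-\rho)|/(\log t_0-\psi(-\rho))$ stays bounded on $[\rho_0,1]$ (both $|\Gamma(-\rho)|$ and $-\psi(-\rho)$ blow up like $(1-\rho)^{-1}$ as $\rho\to1$). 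Then $\frac{d}{d\rho}e_\lambda(\rho)<0$ on $[\rho_0,1-\delta]$. On the remaining sliver $[1-\delta,1)$ the leading algebraic term $-x^{-2}/\Gamma(-\rho)$ is only of order $1-\rho$ while $e_\lambda(\rho)\to e^{-\lambda t_0}$, so there one supplements the two-term expansion with the exponentially small correction, again getting strict decrease with a threshold depending only on $\rho_0,\lambda_1$; the value at $\rho=1$ is then reached by continuity of $e_\lambda$, which yields monotonicity on the closed interval. As $T_0$ was selected from $\rho_0$ and $\lambda_1$ alone, the lemma follows.

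The main obstacle is exactly the behaviour near $\rho=1$: this is where the uniformity of the Mittag--Leffler asymptotics in the parameter $\rho$ degrades, because the leading algebraic coefficient $-1/\Gamma(-\rho)$ tends to $0$ there (consistently with $E_{1,1}(-x)=e^{-x}$), so that the clean two-term estimate must be either replaced by a uniform expansion retaining the exponentially small part or bypassed by a separate short argument on a neighbourhood of $\rho=1$. Away from $\rho=1$, and for the uniformity in $\lambda$, everything reduces to the integral representation and the standard asymptotics and is routine.
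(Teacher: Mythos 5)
Your positivity argument via the branch-cut (spectral) representation of $t^{\rho-1}E_{\rho,\rho}(-\lambda t^{\rho})$ is correct and in fact cleaner than what the paper does: the paper only gets positivity indirectly, from the negativity of $\frac{d}{d\rho}e_\lambda$ together with $e_\lambda(1)=e^{-\lambda t_0}>0$, whereas your integral gives it for every $t_0>0$. The leading term you isolate, $-\lambda^{-2}t_0^{-\rho-1}/\Gamma(-\rho)$, is exactly the paper's $f_1(\rho)$, and your computation of its logarithmic derivative ($\log t_0-\psi(-\rho)$, with the observation that $\log t_0-\psi(-\rho)\geq \log t_0-\psi(-\rho_0)$ by monotonicity of $\psi(-\cdot)$) matches the paper's treatment of $f_1'$. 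So the skeleton is the same: main algebraic term plus remainder, with largeness of $t_0$ beating the digamma term.

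The gap is in the remainder. You differentiate the asymptotic expansion $E_{\rho,\rho}(-x)=-x^{-2}/\Gamma(-\rho)+O(x^{-3})$ with respect to the \emph{parameter} $\rho$ and assert that ``the remainder and its $\rho$-derivative satisfy uniform bounds of the same order.'' That is precisely what must be proved, and it does not follow from the size of the remainder alone: $x=\lambda t_0^{\rho}$ itself depends on $\rho$, and the only way to control $\partial_\rho$ of the error is to go back to an exact representation with an explicit remainder. This is what the paper does — it writes $e_\lambda(\rho)=f_1(\rho)+f_2(\rho)$ with $f_2$ an explicit Hankel-contour integral over $\delta(1;\tfrac{3\pi}{4}\rho)$, differentiates under the integral sign while accounting for the $\rho$-dependence of the contour (splitting into the two rays and the arc), and obtains $|f_2'(\rho)|\leq C(1/\rho+\ln t_0)\lambda^{-3}t_0^{-2\rho-1}$, which carries the extra factor $(\lambda t_0^{\rho})^{-1}$ relative to $|f_1'|\geq \lambda^{-2}t_0^{-\rho-1}$ uniformly on all of $[\rho_0,1]$. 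Your second gap is the sliver $[1-\delta,1)$: you correctly identify that the expansion degenerates there (the coefficient $-1/\Gamma(-\rho)$ vanishes at $\rho=1$), but then dispose of it with ``one supplements the two-term expansion with the exponentially small correction, again getting strict decrease'' — this is an assertion of the conclusion, not an argument. The paper avoids the problem entirely because its formula for $f_1'$, rewritten via $1/\Gamma(-\rho)=-\rho(1-\rho)/\Gamma(2-\rho)$ and $\Psi(-\rho)=\Psi(2-\rho)+1/\rho-1/(1-\rho)$, is exact and manifestly regular at $\rho=1$ (the $(1-\rho)^{-1}$ singularities of $|\Gamma(-\rho)|$ and $-\Psi(-\rho)$ cancel, leaving $f_1'\leq-\lambda^{-2}t_0^{-\rho-1}$ on the whole closed interval), and its remainder bound is likewise uniform up to $\rho=1$. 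To complete your proof you would need to supply exactly this uniform control near $\rho=1$, which in practice means reproducing the contour-integral estimate.
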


\begin{proof} Let us denote by $\delta(1; \beta)$ a contour oriented by
non-decreasing $\arg \zeta$ consisting of the following parts: the
ray $\arg  \zeta = -\beta$, $|\zeta|\geq 1$, the arc $-\beta\leq
\arg \zeta \leq \beta$, $|\zeta|=1$, and the ray $\arg \zeta =
\beta$, $|\zeta|\geq 1$. If $0<\beta <\pi$, then the contour
$\delta(1; \beta)$ divides the complex $\zeta$-plane into two
unbounded parts, namely $G^{(-)}(1;\beta)$ to the left of
$\delta(1; \beta)$ by orientation, and $G^{(+)}(1;\beta)$ to the
right of it. The contour $\delta(1; \beta)$ is called the Hankel
path.

Let $\beta = \frac{3\pi}{4}\rho$, $\rho\in [\rho_0, 1)$. Then by
the definition of this contour $\delta(1; \beta)$, we arrive at
(see \cite{ash17}, formula (2.29), p. 135, note $-\lambda
t_0^\rho\in G^{(-)}(1;\beta)$)
\begin{equation}\label{Erho}
t_0^{\rho-1}E_{\rho, \rho}(-\lambda t_0^\rho)= -
\frac{1}{\lambda^2 t_0^{\rho+1} \Gamma (-\rho)}+\frac{\rho}{2\pi i
\lambda^2
t_0^{\rho+1}}\int\limits_{\delta(1;\beta)}\frac{e^{\zeta^{1/\rho}}\zeta^{\frac{1}{\rho}+1}}{\zeta+\lambda
t_0^\rho} d\zeta = f_1(\rho)+f_2(\rho).
\end{equation}

To prove the lemma, it is suffices to show that the derivative  $\frac{d}{d\rho} e_\lambda(\rho)$   is negative for all $\rho\in[\rho_0,1)$, since the positivity of $e_\lambda(\rho) $ follows from the inequality  $e_\lambda(1)=e^{-\lambda t}>0 $.

It is not hard to estimate the derivative
$f'_1(\rho)$. Indeed, let $\Psi(\rho)$ be the logarithmic derivative of the gamma
function $\Gamma(\rho)$ (for the definition and properties of
$\Psi$ see \cite{ash20}). Then $\Gamma'(\rho) = \Gamma (\rho)
\Psi(\rho)$, and therefore,
$$
f_1'(\rho)=\frac{\ln t_0 - \Psi (-\rho)}{\lambda^2 t_0^{\rho
+1}\Gamma (-\rho)}.
$$
Since
$$
\frac{1}{\Gamma(-\rho)}=-\frac{\rho}{\Gamma(1-\rho)}=-\frac{\rho(1-\rho)}{\Gamma(2-\rho)},
\quad \Psi(-\rho)=\Psi(1-\rho)
+\frac{1}{\rho}=\Psi(2-\rho)+\frac{1}{\rho}-\frac{1}{1-\rho},
$$
the function $f_1'(\rho)$  can be
represented as follows
\begin{equation}\label{f1}
f_1'(\rho)=\frac{1}{\lambda^2 t_0^{\rho+1}}
\frac{\rho(1-\rho)[\Psi (2-\rho)-\ln t_0]+1-2\rho}{ \Gamma
(2-\rho)}=-\frac{f_{11}(\rho)}{\lambda^2 t_0^{\rho+1}\Gamma
(2-\rho)}.
\end{equation}
If $\gamma\approx 0,57722$ is the Euler-Mascheroni constant, then
$\Psi(2-\rho)< 1-\gamma$. Therefore,
\[
f_{11}(\rho)>\rho (1-\rho)[\ln t_0 -(1-\gamma)])+2\rho-1.
\]
For $t_0=e^{1-\gamma} e^{2/\rho}$ one has $\rho (1-\rho)[\ln t_0
-(1-\gamma)])+2\rho-1=1$. Hence, $f_{11}(\rho)\geq 1$, provided
$t_0\geq T_0$ and
\begin{equation}\label{t0}
T_0= e^{1-\gamma} e^{2/\rho_0}.
\end{equation}
Thus, by virtue of (\ref{f1}), for all such $t_0$ we arrive at
\begin{equation}\label{f11}
f_1'(\rho)\leq-\frac{1}{\lambda^2 t_0^{\rho+1}}.
\end{equation}

To estimate the derivative   $f'_2(\rho)$, we denote the integrand in (\ref{Erho}) by $F(\zeta, \rho)$:

\[
F(\zeta, \rho)=\frac{1}{2\pi i \rho\lambda^2 t_0^{\rho+1}}\cdot
\frac{e^{\zeta^{1/\rho}}\zeta^{1/\rho+1}}{\zeta+\lambda
	t_0^\rho}.
\]

Note, that the domain of integration $\delta(1;\beta)$ also depends on $\rho$. To take this circumstance into account when differentiating
the function $f'_2(\rho)$, we rewrite the integral (\ref{Erho}) in the form:

\[
f_2(\rho)=f_{2+}(\rho)+f_{2-}(\rho)+f_{21}(\rho),
\]
where
\[
f_{2\pm}(\rho)=e^{\pm i \beta}\int\limits_1^\infty
F(s\,e^{\pm i\beta}, \rho)\, ds,
\]

\[
f_{21}(\rho) = i \int\limits_{-\beta}^{\beta} F(e^{i y},
\rho)\, e^{iy} dy= i\beta \int\limits_{-1}^{1} F(e^{i \beta s},
\rho)\, e^{i\beta s} ds.
\]

Let us consider the function  $f_{2+}(\rho)$. Since $\beta=\frac{3\pi}{4}\rho$
 and $\zeta= s\, e^{i\beta}$, then

\[
e^{\zeta^{1/\rho}}=e^{ \frac{1}{\sqrt{2}} (i-1)s^{\frac{1}{\rho}}},
\]

The derivative of the function $f_{2+}(\rho)$ has the form

$$
f_{2+}'(\rho)={I}\cdot\int\limits_1^\infty \frac{e^{ \frac{1}{\sqrt{2}} (i-1)s^{\frac{1}{\rho}}}s^{1/\rho+1}e^{2ia\rho}\big[\frac{1}{\rho^2}( \frac{1}{\sqrt{2}} (1-i)s^{\frac{1}{\rho}}-1)\ln s+2ia -\frac{1}{\rho}-\ln	t_0-\frac{ias e^{ia\rho} +\lambda
		t_0^\rho \ln t_0}{se^{ia\rho}  +\lambda	t_0^\rho}\big]}{se^{ia\rho}  +\lambda	t_0^\rho} ds.
$$
where $I=e^{ia}(2\pi i\rho\lambda^2t_0^{\rho+1})$ and $a=\frac{3\pi}{4}$. By virtue of the inequality  $|se^{ia \rho} + \lambda t_0^\rho| \geq \lambda t_0^\rho$
we arrive at

$$
|f_{2+}'(\rho)|\leq \frac{C}{\rho \lambda^3	t_0^{2\rho+1}} \cdot\int\limits_1^\infty e^{- \frac{1}{2} s^{\frac{1}{\rho}}}s^{1/\rho+1}\big[\frac{1}{\rho^2} s^{\frac{1}{\rho}}\ln s+\ln t_0 \big]ds.
$$

\begin{lem}\label{I} Let $0<\rho\leq 1$ and $m\in \mathbb{N}$.
Then
\[
I(\rho)=\frac{1}{\rho}\int\limits_1^\infty
e^{-\frac{1}{2}s^{\frac{1}{\rho}}} s^{\frac{m}{\rho}+1} ds\leq
C_m.
\]
\end{lem}
\begin{proof} Set $r=s^{\frac{1}{\rho}}$. Then
\[
s=r^\rho, \quad ds = \rho r^{\rho-1} dr.
\]
Therefore,
\[
I(\rho)=\int\limits_1^\infty e^{-\frac{1}{2}r} r^{m-1+2\rho}
dr\leq \int\limits_1^\infty e^{-\frac{1}{2}r} r^{m+1} dr = C_m.
\]

\end{proof}

Lemma \ref{I} is proved.

Application of this lemma gives (note, $ \frac{1}{\rho}ln s<s^{\frac{1}{\rho}}$, provided $ s \geq 1$)

$$
|f_{2+}'(\rho)|\leq \frac{C}{\lambda^3	t_0^{2\rho+1}} \big[\frac{C_3}{\rho}+C_1 ln t_0 \big] \leq \frac{C}{\lambda^3	t_0^{2\rho+1}}\big[\frac{1}{\rho}+ln t_0 \big].
$$

Function $f_{2-}(\rho)$ has exactly the same estimate.

Now consider the function $f_{21}(\rho)$. For its derivative we have

$$
f_{21}'(\rho)=\frac{a}{2\pi i\lambda^2 t_0^{\rho+1}}\cdot\int\limits_{-1}^1 \frac{e^{ e^{ias}} e^{ias}e^{2ia\rho s}\big[2ias -\ln	t_0-\frac{ias e^{ia\rho} +\lambda
		t_0^\rho \ln t_0}{e^{ia\rho s}  +\lambda	t_0^\rho}\big]}{e^{ia\rho s}  +\lambda	t_0^\rho} ds.
$$

Therefore,

\[
|f_{21}'(\rho)|\leq C
\frac{ln t_0}{\lambda^3 t_0^{2\rho+1}}.
\]

Taking into account stimmte  (\ref{f11}) and the estimates of $ f_{2\pm}' $ 
 and $ f_{21}' $, we have

\[
\frac{d}{d\rho} e_\lambda(\rho)< -\frac{1}{\lambda^2
	t_0^{\rho+1}}+C\frac{1/\rho+\ln t_0}{\lambda^3 t_0^{2\rho+1}}.
\]

In other words, this derivative is negative if
\[
t_0^{\rho_0}>\frac{C}{\lambda_0}(\frac{1}{\rho_0}+ \ln t_0).
\]

Hence, there exists a number $T_0=T_0(\lambda_0,\rho_0) $ (see also (\ref{t0})) such, that for all $t_0\geq T_0$

\[
\frac{d}{d\rho}[{t_0^{\rho-1}E_{\rho,\rho}(-\lambda t_0^{\rho})}] < 0, \quad \lambda\geq
\lambda_0,\quad \rho\in [\rho_0,1].
\]
Lemma \ref{ek} is proved.
\end{proof}
Since

$$
W(t,\rho)=||u(t)||^2=\sum\limits_{k=1}^\infty |(\varphi,
v_k)|^2|t^{\rho-1}E_\rho(-\lambda_k t^\rho)|^2,
$$
then Lemma \ref{W} follows immediately from Lemma \ref{ek}. Theorem \ref {tin}  is an easy consequence of these two lemmas.

\

\bibliographystyle{amsplain}

\end{document}